\documentclass[12pt]{amsart}
\usepackage{amsmath,amstext, amsfonts,amssymb}
\usepackage{mathrsfs}
\usepackage[T2A]{fontenc}
\usepackage[utf8]{inputenc}
\usepackage[english]{babel}
\usepackage{cite, color}

\usepackage{pgfplots}
\pgfplotsset{compat=1.18}
\usepgfplotslibrary{groupplots}

\newcommand{\RR}{\mathbb R}
\newcommand{\NN}{\mathbb N}

\newcommand{\kk}{{\bf k}}
\newcommand{\kUpper}{{\kk^\uparrow}}

\newtheorem{theorem}{Theorem}
\newtheorem{lemma}{Lemma}

\allowdisplaybreaks

\begin{document}
\title[Kolmogorov-type inequalities for more than three norms]{A note on the Kolmogorov-type inequalities for more than three norms}
\author[O.~V.~Kovalenko]{Oleg Kovalenko}
\address{Department of Mathematical Analysis and Optimization, Oles Honchar Dnipro National University, Dnipro, Ukraine}
\email{olegkovalenko90@gmail.com}
\subjclass[2020]{26D10,  41A44, 41A17}

\begin{abstract}
 In this note we show that sharp Kolmogorov-type inequalities that  estimate the uniform norm $\|f^{(k)}\|$ of the $k$-th derivative of a function $f\colon \RR\to\RR$  by the values of the uniform norm of $f$ and uniform norms of several its higher derivatives ($\|f^{(r)}\|$ and $\|f^{(r-1)}\|$, or $\|f^{(r)}\|$ and $\|f^{(r-2)}\|$, or $\|f^{(r)}\|$, $\|f^{(r-1)}\|$ and $\|f^{(r-2)}\|$) using standard techniques can be obtained from the known solutions to the Kolmogorov problem about existence of a function with given norms of its derivatives. 
\end{abstract}
\keywords{Inequality for derivatives, Kolmogorov-type inequality, Kolmogorov's problem, modulus of continuity of an operator}

\maketitle
\section{Introduction}
Inequalities for derivatives is an important topic in approximation theory that has many applications. First sharp (i.e., with the smallest constant) inequality was obtained by Landau~\cite{Landau13b} in 1913. He proved that if $a,b>0$ and a function $f\colon \RR\to\RR$ is such that $|f(t)|\leq a$ and $|f''(t)|\leq b$ for all $t\in \RR$, then 
\begin{equation}\label{LandauInequality}
  |f'(t)|\leq \sqrt{2ab}, t\in\RR,
\end{equation}
 and the constant $\sqrt{2}$ can not be decreased.

Denote by $L_\infty$ the class of measurable essentially bounded functions $f\colon \RR\to\RR$ and let $\|\cdot\|$ be the uniform norm. For $r\in\NN$ by  $L^r_\infty$ we denote the class of continuous functions $f\colon \RR\to\RR$ that have $r-1$ derivatives, $f^{(r-1)}$ is locally absolutely continuous, and $f^{(r)}\in L_\infty$. Set $L^r_{\infty,\infty}:= L_\infty\cap L^r_\infty$. 

Denote by $\varphi_r$, $r\in\NN$, the Euler perfect spline i.e., the $r$-th periodic primitive  of the function ${\rm sgn}\sin x$ with zero mean value; for $\lambda > 0$ set $\varphi_{\lambda, r}(t) :=\lambda^{-r}\varphi(\lambda t)$, $t\in\RR$. 

In 1938 Kolmogorov~\cite{Kolmogorov1938} proved that if $0<k<r$ and $f\in L^r_{\infty,\infty}$, then $f^{(k)}\in L_\infty$ and 
\begin{equation}\label{kolmogorovInequality}
\|f^{(k)}\|\leq \frac{\|\varphi_{r-k}\|}{\|\varphi_r\|^{1-\frac kr}} \|f\|^{1-\frac kr}\|f^{(r)}\|^{\frac kr},    
\end{equation}
and the constant $\frac{\|\varphi_{r-k}\|}{\|\varphi_r\|^{1-\frac kr}}$ can not be reduced. Such inequalities for derivatives are often called the Kolmgorov-type or Landau--Kolmogorov-type inequalities. These inequalities were heavily studied, we refer to monograph~\cite{BKKP}, survey~\cite{Arestov1996} and references therein  for an overview of many results in this area.

In the proof of inequality~\eqref{kolmogorovInequality} Kolmogorov proved the comparison theorem, which is arguably every bit as important as the inequality itself.  For $f\in L^r_{\infty,\infty}$ and $0\leq k\leq r$ set $M_k(f):= \|f^{(k)}\|$.
\begin{theorem}\label{th::kolmogorovComparisonTheorem}
    Let $f\in L^r_{\infty,\infty}$ and $a,\lambda > 0$ be such that $M_0(f) \leq M_0(a\varphi_{\lambda,r})$ and $M_r(f) \leq M_r(a\varphi_{\lambda,r})$. If $\xi,\eta\in\RR$ are such that $f(\xi) = a\varphi_{\lambda,r}(\eta)$, then 
    $$
   |f'(\xi)|\leq |a\varphi'_{\lambda,r}(\eta)|.
    $$
\end{theorem}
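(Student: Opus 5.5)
We argue by contradiction, using the classical zero-counting argument. Write $\varphi:=a\varphi_{\lambda,r}$. Suppose $|f'(\xi)|>|\varphi'(\eta)|$. First we reduce to strict inequalities in the hypotheses. Replace $\varphi$ by $\varphi_\varepsilon:=(1+\varepsilon)\varphi$ with small $\varepsilon>0$, and then choose a point $\eta_\varepsilon$ near $\eta$ with $\varphi_\varepsilon(\eta_\varepsilon)=f(\xi)$; this is possible when $|f(\xi)|<\|\varphi\|$. We still have $|f'(\xi)|>|\varphi_\varepsilon'(\eta_\varepsilon)|$ for small $\varepsilon$, and now $\|f\|<\|\varphi_\varepsilon\|$ and $\|f^{(r)}\|<\|\varphi_\varepsilon^{(r)}\|$. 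In the boundary case $|f(\xi)|=\|\varphi\|$ the point $\xi$ is an extremum of $f$, so $f'(\xi)=0$ and there is nothing to prove.

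Next, shift $\varphi_\varepsilon$ so that $\eta_\varepsilon=\xi$, and pick the shift so that $f'(\xi)$ and $\varphi_\varepsilon'(\xi)$ have the same sign. This can be done by reflecting within the period, since $\varphi$ is symmetric about its extremal points: the value is preserved and the slope changes sign. Set $\Delta:=\varphi_\varepsilon-f$. Then $\Delta(\xi)=0$ and $\Delta'(\xi)\neq 0$; more precisely, $f$ crosses $\varphi_\varepsilon$ strictly more steeply at $\xi$.

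The heart of the proof is to count sign changes of $\Delta$. Let $\varphi_\varepsilon$ be monotone on the interval $[\alpha,\beta]$ of length $\pi/\lambda$ containing $\xi$, with extremal values $\pm\|\varphi_\varepsilon\|$ at its endpoints. Since $\|f\|<\|\varphi_\varepsilon\|$, the function $\Delta$ has strict alternating signs at all extremal points $\alpha+j\pi/\lambda$ of $\varphi_\varepsilon$. The steeper crossing at $\xi$ adds one more sign change inside $[\alpha,\beta]$: near $\xi$ the function $\Delta$ changes sign against the monotonicity of $\varphi_\varepsilon$, so together with the endpoint signs it has at least three sign changes on $[\alpha,\beta]$ instead of one. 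Take a window of $2m$ consecutive monotonicity intervals around $\xi$. On it, $\Delta$ has at least $2m+2$ sign changes. By Rolle's theorem, applied $r-1$ times, $\Delta^{(r-1)}$ still has roughly $2m+2-(r-1)$ sign changes on a window only slightly enlarged (by about $r$ intervals). The error terms from the enlargement are bounded independently of $m$, and we must compare against the count for $\varphi_\varepsilon$ itself on the same window.

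Finally, $\varphi_\varepsilon^{(r-1)}$ is a piecewise linear zigzag with slopes $\pm\|\varphi_\varepsilon^{(r)}\|$. Because $\|f^{(r)}\|<\|\varphi_\varepsilon^{(r)}\|$, the difference $\Delta^{(r-1)}$ is strictly monotone on each linear piece, so it has at most one zero there. This caps the number of sign changes of $\Delta^{(r-1)}$ at the number of pieces, which is about $2m$ plus a bounded boundary term. For large $m$ this contradicts the lower bound, because the one extra sign change per interval at $\xi$ is not lost. \textbf{The main obstacle} is the precise bookkeeping that makes this contradiction rigorous, namely making the surplus survive the Rolle steps. To handle it, I would work with $\Delta$ restricted to a period after periodizing the comparison, or use the standard version of the argument: choose the shift so that $\Delta$ has a double-type surplus on a period, then count zeros of $\Delta^{(r-1)}$ on a full period $[0,2\pi/\lambda]$ of the periodic function $\varphi_\varepsilon$. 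The difficulty is that $f$ is not periodic. The fix is to count on a long interval of $N$ periods and let the boundary loss, which is $O(r)$, be dominated by the surplus. The surplus is local, however, so instead one constrains $\Delta$ to have strict alternation at $2N+1$ extremal points plus the two extra zeros. That gives $2N+2$ sign changes on an interval covered by $2N$ pieces after differentiation, and the contradiction follows from a careful endpoint count.
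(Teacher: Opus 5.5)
The paper does not prove this theorem; it quotes it as Kolmogorov's classical result, so your proposal has to stand on its own. Your preliminary steps are fine: the passage to $\varphi_\varepsilon=(1+\varepsilon)\varphi$, the reflection that gives $f'(\xi)$ and $\varphi_\varepsilon'(\xi)$ the same sign, and the observation that $\Delta=\varphi_\varepsilon-f$ has at least three sign changes on the monotonicity interval $[\alpha,\beta]$ containing $\xi$.

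The gap is the step you yourself flag as the main obstacle, and the resolution you sketch does not work. Take a window of $2N$ consecutive monotonicity intervals. There $\Delta$ has at least $2N+2$ sign changes. But $r-1$ applications of Rolle's theorem on that window only give $2N+3-r$ sign changes of $\Delta^{(r-1)}$. Meanwhile $\varphi_\varepsilon^{(r-1)}$ has $2N$ linear pieces on the window when $r$ is odd, and $2N+1$ when $r$ is even (its breakpoints are then offset by half an interval). So there is no contradiction for any $r\ge 2$. Enlarging $N$ does not help, because the surplus is $2$ in total and the loss is about $r$, both independent of $N$. Your closing claim of ``$2N+2$ sign changes on an interval covered by $2N$ pieces after differentiation'' is exactly what Rolle's theorem on a bounded interval does not give. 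Without sign information on $\Delta^{(j)}$, $0<j<r-1$, at the ends of the window, each differentiation may lose a sign change.

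To close the gap you need one of two further ideas.

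\textbf{Induction on $r$ (the classical route).} Here the Kolmogorov inequality is obtained along the way.
First, suppose $\|f'\|>\|\varphi'\|$ and apply the theorem for $r-1$ to $f'$ and a dilated spline $a\varphi_{\mu,r-1}$ with $\|a\varphi_{\mu,r-1}\|=\|f'\|$, so $\mu<\lambda$. Near a point where $|f'|$ is close to $\|f'\|$, the function $\pm f'$ then dominates almost a whole hump of this spline. That hump has integral $2a\mu^{-r}\|\varphi_r\|>2\|\varphi\|\ge 2\|f\|$, a contradiction, so $\|f'\|\le\|\varphi'\|$.
Next, apply the theorem for $r-1$ to $f'$ versus $\varphi_\varepsilon'$. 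It shows that $\Delta'$ alternates strictly at the extremal points of $\varphi_\varepsilon'$ and changes sign exactly once between consecutive ones. Hence on $[\alpha,\beta]$ the function $\Delta'$ changes sign at most twice, and it has the sign of $\varphi_\varepsilon'$ on a middle subinterval containing the midpoint of $[\alpha,\beta]$. Together with the signs of $\Delta(\alpha)$ and $\Delta(\beta)$, this leaves room for only one sign change of $\Delta$.

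\textbf{Periodizing $f$ itself.} Your remark about periodizing can be made rigorous, but only by periodizing $f$, not the comparison. Choose $2T=2\pi m/\lambda$ and glue $f$ and $f(\cdot-2T)$ with a smooth partition of unity over a transition region of length $T$. This gives a $2T$-periodic $g$ that coincides with $f$ near $\xi$, with $\|g\|\le\|f\|$ and $\|g^{(r)}\|\le\|f^{(r)}\|+O(1/T)$, using that $f',\dots,f^{(r-1)}$ are bounded. On the circle Rolle's theorem loses nothing. So $\Delta^{(r-1)}$ would have at least $2m+2$ sign changes on a period that contains only $2m$ linear pieces of $\varphi_\varepsilon^{(r-1)}$, which is impossible.

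Neither ingredient is present in your proposal as it stands.
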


Let $X\subset L^r_{\infty,\infty}$ be some non-empty set and $0<k<r$.
The quantity 
\begin{equation}\label{modulusOfContinuity}
  \omega(D^k, X;\delta) = \omega(X;\delta):= \sup_{f\in X, M_0(f)\leq \delta}M_k(f), \delta >0  
\end{equation}
is called the modulus of continuity of the differential  operator $D^k$ on the class $X$. We refer to~\cite[Chapter~7]{BKKP} for a discussion of the connection between the Kolmogorov-type inequalities and related problems, including the problem of finding the modulus of continuity of an operator. Observe that the fact that inequality~\eqref{kolmogorovInequality} holds on $L^r_{\infty,\infty}$ and is sharp, is equivalent to equality
$$
\omega(D^k, W^r_{\infty,\infty};\delta) = \frac{\|\varphi_{r-k}\|}{\|\varphi_r\|^{1-\frac kr}} \delta^{1-\frac kr}, \delta > 0,
$$
where $W^r_{\infty,\infty} = \{f\in L^r_{\infty,\infty}\colon M_r(f)\leq 1\}$. 
If one has 
$$
X = \{f\in L^r_{\infty,\infty}\colon p(f)\leq 1\}
$$
with some non-negative functional $p$ such that $p(c\cdot f) = c^\alpha p(f)$ for all $f\in L^r_{\infty,\infty}$, all $c > 0$, and some $\alpha\in\RR$, then for all $f$ such that $p(f)\neq 0$, one has
\begin{equation}\label{inequalityInTermsOfModulusOfContinuity}
\|f^{(k)}\| =p^{\frac 1\alpha}(f)\left\|\frac{f^{(k)}}{p^{\frac 1\alpha}(f)}\right\|\leq p^{\frac 1\alpha}(f)\cdot \omega\left(D^k,X;\frac{\|f\|}{p^{\frac 1\alpha}(f)}\right).
\end{equation}
In particular, if $p(f) = 0\iff f = 0$ and we know the function $\omega$, then inequality~\eqref{inequalityInTermsOfModulusOfContinuity} holds for all $f\in L^r_{\infty,\infty}$ and can be viewed as an abstract Kolmogorov-type inequality that estimates the value $\|f^{(k)}\|$ in terms of the quantities $\|f\|$ and $p(f)$.

Another related problem, which was formulated by Kolmogorov in 1926 (we will refer to it as Kolmogorov's problem) is a follows. Assume that a set of integer numbers 
$$
0\leq k_1 < k_2<\ldots < k_d\leq r, {\bf k} = (k_1,\ldots, k_d)
$$
is given. The problem is to find necessary and sufficient conditions on positive numbers $M_{k_1},\ldots, M_{k_d}$, $M_{\bf k} = (M_{k_1},\ldots, M_{k_d})$  to guarantee existence of a function $f\in L_{\infty,\infty}^r$ such that 
$$
M_{\bf k}(f) = M_{\bf k}, \text{ where }M_{\bf k}(f) = (M_{k_1}(f),\ldots, M_{k_d}(f)).
$$
Kolmogorov's article~\cite{Kolmogorov1938} was actually devoted to a solution of this problem in the case $d = 3$, $k_1 = 0, k_2 = k, k_3 = r$. In this case the required necessary and sufficient condition is the inequality
$$
M_k\leq \frac{\|\varphi_{r-k}\|}{\|\varphi_r\|^{1-\frac kr}} M_0^{1-\frac kr}M_r^{\frac kr}.
$$
We refer to articles~\cite{Kovalenko15a,Kovalenko19} for an overview of known results regarding the Kolmogorov problem. A convenient alternative formulation of the Kolmogorov problem was suggested in~\cite{Kovalenko15a}. Given a fixed vector ${\bf k}$ (of orders of the derivatives as above), find a minimal (in some sense) family $F_{\bf k}\subset L^r_{\infty,\infty}$ such that 
\begin{equation}\label{alternativeKolmogorovProblem}
M_{\bf k}(L^r_{\infty,\infty}) = M_{\bf k}(F_{\bf k}),
\end{equation}
where for a set $X\subset L^r_{\infty,\infty}$ we use the notation 
$$M_{\bf k}(X) = \{M_{\bf k}(f)\colon f\in X\}.$$
Under these notations Kolmogorov's result can be formulated as follows. For ${\bf k} = (0, k, r)$, $0< k<r$, one has 
$$
F_{\bf k} = \{a\cdot\varphi_{\lambda,r} + b\colon a,\lambda > 0, b\geq 0\},
$$
and Theorem~\ref{th::kolmogorovComparisonTheorem} is the key result to prove that equality~\eqref{alternativeKolmogorovProblem} holds.

Recently the Kolmogorov-type inequalities that involve more than three norms were investigated. In particular, in~\cite{Dragomir} the following inequalities
\begin{equation}\label{dragomirInequality}
\|f'\|\leq C_\eta\cdot \|f\|^{\frac{1+\eta}{2+\eta}}\cdot \|f''\|^{\frac{1-\eta}{2+\eta}}\cdot \|f'''\|^{\frac{\eta}{2+\eta}},
\end{equation}
with explicitly found sharp constants $C_\eta$, $\eta\in [0,1]$, were proved. 

The goal of this note is to show that such kind of inequalities can be deduced from the known solutions for the Kolmogorov problem. Namely, we show that a sharp Kolmogorov-type inequality that estimates
$\|f^{(k)}\|$ via  $\|f\|$, $\|f^{(r-1)}\|$ and $\|f^{(r)}\|$, $0<k<r-1$, follows from the known  (see~\cite{Dzyadyk75}) solution of the Kolmogorov problem with ${\bf k} = (0,k,r-1,r)$; a sharp Kolmogorov-type inequality that estimates $\|f^{(k)}\|$ via  $\|f\|$, $\|f^{(r-2)}\|$ and $\|f^{(r)}\|$, $0<k<r-2$ follows from the known  (see~\cite{Babenko12}) solution of the Kolmogorov problem with ${\bf k} = (0,k,r-2,r)$; a sharp Kolmogorov-type inequality that estimates
$\|f^{(k)}\|$ via  $\|f\|$, $\|f^{(r-2)}\|$, $\|f^{(r-1)}\|$ and $\|f^{(r)}\|$, $0<k<r-2$, follows from the known  (see~\cite{Rodov46}) solution of the Kolmogorov problem with ${\bf k} = (0,k,r-2,r-1,r)$.
\section{Connection Between the Kolmogorov Problem and the Kolmogorov-Type Inequalities}
\subsection{Extremal Splines}
 First of all, we define a family of polynomial splines that are extremal for the Kolmogorov problem in the cases ${\bf k}\in K$, where
\begin{equation}\label{kVectors}
 K = \{(0,k,r-1,r), (0,k,r-2,r), (0,k,r-2,r-1,r)\};
\end{equation}
here $k,r$ are natural numbers such that $k<r-1$ for ${\bf k} = (0,k,r-1,r)$, and $k<r-2$ for other two elements of the set $K$.
For all $a,b,c\geq 0$ set 
$$
\psi_0(a,b,c;t) = \begin{cases}
    0, & t\in [0,a],\\
    1, & t\in (a,a+b],\\
    0, & t\in (a+b, a+b+c].
\end{cases}
$$
Continue $\psi_0(a,b,c)$ evenly with respect to the point $a+b+c$ and oddly with respect to the point $2(a+b+c)$; finally continue the function $\psi_0$ to the whole line periodically with period $4(a+b+c)$. Let $\psi_s = \psi_s(a,b,c)$ be the $s$-th periodic primitive of $\psi_0$ with zero mean value, $s\geq 1$. These functions were introduced by Rodov~\cite{Rodov46}. We note that 
$$
\psi_1(a,b,c;t) = 
\begin{cases}
    -b, & t\in [0,a],\\
    t - a -b, & t\in (a,a+b],\\
    0, & t\in (a+b, a+b+c],
\end{cases}
$$
is odd with respect to the point $a+b+c$ and even with respect to $2(a+b+c)$; 
\begin{equation}\label{psi2}
\psi_2(a,b,c;t) = 
\begin{cases}
    -bt, & t\in [0,a],\\
    \frac{(t - a -b)^2}2 -ab - \frac {b^2}{2}, & t\in (a,a+b],\\
    -ab - \frac {b^2}{2}, & t\in (a+b, a+b+c],
\end{cases}
\end{equation}
is even with respect to $a+b+c$ and odd with respect to $2(a+b+c)$.

One period of the functions $\psi_0, \psi_1$ and $\psi_2$ is drawn on the picture below.

\begin{tikzpicture}
    \pgfmathsetmacro{\a}{0.7}
    \pgfmathsetmacro{\b}{1.3}
    \pgfmathsetmacro{\c}{1}
    \pgfmathsetmacro{\L}{\a + \b + \c}

    \begin{groupplot}[
        group style={
            group size=1 by 3,
            vertical sep=0cm,
            x descriptions at=edge bottom
        },
        width=12cm, height=5cm,
        grid=none,
        axis lines=middle,
        xmin=0, xmax={4*\L},
        no marks,
        yticklabels={},
        xticklabels={},
    ]
    \nextgroupplot[ylabel={$\psi_0$}, ymin=-1.5, ymax=1.5]
    \addplot[thick] coordinates {
        (0,0) (\a,0) (\a,1) (\a+\b,1) (\a+\b,0) (\L,0)          
        (\L+\c,0) (\L+\c,1) (2*\L-\a,1) (2*\L-\a,0) (2*\L,0)    
        (2*\L+\a,0) (2*\L+\a,-1) (2*\L+\a+\b,-1) (2*\L+\a+\b,0) (3*\L,0) 
        (3*\L+\c,0) (3*\L+\c,-1) (4*\L-\a,-1) (4*\L-\a,0) (4*\L,0) 
    };

    \nextgroupplot[ylabel={$\psi_1$}, ymin={-\b-0.5}, ymax={\b+0.5}]
    \addplot[thick] coordinates {
        (0,-\b) (\a,-\b) (\a+\b,0) (\L,0)                       
        (\L+\c,0) (2*\L-\a,\b) (2*\L,\b)                        
        (2*\L+\a,\b) (2*\L+\a+\b,0) (3*\L,0)                    
        (3*\L+\c,0) (4*\L-\a,-\b) (4*\L,-\b)                    
    };

    \nextgroupplot[ylabel={$\psi_2$},
        declare function={
            p2_base(\t) = ifthenelse(\t <= \a, -\b*\t, ifthenelse(\t <= \a+\b, 0.5*(\t-\a-\b)^2 - \a*\b - 0.5*\b^2, -\a*\b - 0.5*\b^2));
            p2_h(\t) = ifthenelse(\t <= \L, p2_base(\t), p2_base(2*\L-\t));
            p2_f(\t) = ifthenelse(\t <= 2*\L, p2_h(\t), -p2_h(\t-2*\L));
        }
    ]
    \addplot[thick, domain=0:4*\L, samples=50] {p2_f(x)};
    \end{groupplot}
\end{tikzpicture}

The formulae above imply that 
\begin{equation}\label{splineNorms}
\|\psi_0(a,b,c)\| = 1; \|\psi_1(a,b,c)\| = b; \|\psi_2(a,b,c)\| = ab + \frac{b^2}{2}.
\end{equation}
We also note that for $r\in\NN$ and $b >0$, $\psi_r(0,b,0)$ is the Euler's perfect spline $\varphi_{r,\lambda}$ with appropriately chosen parameter $\lambda$.

Define families of splines 
$$
S_{(0,k,r-2,r-1,r)} = \{\alpha\cdot\psi_r(a,b,c)\colon b,\alpha > 0, a,c\geq 0\},
$$
$$
S_{(0,k,r-1,r)} = \{\alpha\cdot\psi_r(a,b,0)\colon b,\alpha > 0, a\geq 0\},
$$
and 
$$
S_{(0,k,r-2,r)} = \{\alpha\cdot\psi_r(0,b,c)\colon b,\alpha > 0, c\geq 0\}.
$$
The solutions to the Kolmogorov problem from articles~\cite{Rodov46,Dzyadyk75,Babenko12} can now be stated in the alternative form as follows: for ${\bf k}\in K$
\begin{equation}\label{alternativeSolution}
F_{{\bf k}} = \{\psi+ d\colon \psi\in S_{\bf k}, d\geq 0\}.   
\end{equation}
Below we use the following notations:
$(0,k,r-2,r-1,r)^\uparrow = (r-2,r-1,r)$, $(0,k,r-1,r)^\uparrow = (r-1,r)$, $(0,k,r-2,r)^\uparrow = (r-2,r)$, so that for  a vector $\kk\in K$ the vector $\kUpper$ represents the vector of higher orders of derivatives. By $|\kk|$ we denote the number of coordinates of the vector $\kk$, so for each $\kk\in K$, one has $|\kk| = |\kUpper|+2$. With some abuse of notation we write $s\in \kk$ to mean that $s$ is some coordinate of the vector $\kk$, and $\min\kk$ to denote the minimal coordinate of $\kk$.

\begin{lemma}\label{l::splinesFamily}
    For each $\kk\in K$ and non-zero $f\in L^r_{\infty,\infty}$ one can construct a one-parametric family of splines $\psi_{\kk}(\beta)\in S_{\kk}$, $\beta\geq 0$ such that $M_{\kUpper}(f) = M_{\kUpper}(\psi_{\kk})$ for all $\beta$, and $M_s(\psi_{\kk}(\beta))$ continuously increases to $\infty$ as $\beta\to\infty$, $0\leq s < \min\kUpper$.
\end{lemma}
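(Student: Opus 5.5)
The plan is to fix the parameters of $\psi_r(a,b,c)$ that are determined by $M_{\kUpper}(f)$, and to let the one remaining free parameter (namely $a$ or $c$) play the role of $\beta$. I will use two facts. First, $\psi_r^{(s)}(a,b,c)=\psi_{r-s}(a,b,c)$. Second, by \eqref{splineNorms},
$$M_r(\alpha\psi_r)=\alpha,\quad M_{r-1}(\alpha\psi_r)=\alpha b,\quad M_{r-2}(\alpha\psi_r)=\alpha\Bigl(ab+\frac{b^2}{2}\Bigr).$$
To know that the prescribed upper norms are attainable at all, I apply the known solution \eqref{alternativeSolution} to $f$. It gives some $\psi+d\in F_\kk$ with $M_\kk(\psi+d)=M_\kk(f)$, and hence $M_{\kUpper}(\psi)=M_{\kUpper}(f)$. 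So the vector $M_{\kUpper}(f)$ is realized by some element of $S_\kk$. In particular it satisfies the needed compatibility conditions, such as $M_{r-2}\ge M_{r-1}^2/(2M_r)$ in the five-norm case, and all of its entries are positive since $f\neq 0$.

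The construction then goes case by case.
\begin{itemize}
\item For $\kk=(0,k,r-1,r)$, set $\alpha=M_r(f)$ and $b=M_{r-1}(f)/M_r(f)$, and let $a=\beta$. The upper norms of $\alpha\psi_r(a,b,0)$ do not depend on $a$.
\item For $\kk=(0,k,r-2,r)$, set $\alpha=M_r(f)$ and $b=\sqrt{2M_{r-2}(f)/M_r(f)}$, and let $c=\beta$.
\item For $\kk=(0,k,r-2,r-1,r)$, set $\alpha=M_r(f)$, $b=M_{r-1}(f)/M_r(f)$ and $a=\bigl(M_{r-2}(f)/\alpha-b^2/2\bigr)/b\ge 0$, and let $c=\beta$.
\end{itemize}
In each case the family stays in $S_\kk$, and $M_{\kUpper}$ is constant in $\beta$.

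Continuity of $\beta\mapsto M_s(\psi_\kk(\beta))$ should be routine. I would rescale time to a fixed period, $t\mapsto 4(a+b+c)t$. Then $\psi_0$ depends continuously on the parameters in $L_1$ over a period. Taking periodic primitives with zero mean is continuous from $L_1$ to the uniform norm, and undoing the rescaling multiplies $\psi_s$ by a continuous power of the period. This gives continuity of $\|\psi_{r-s}\|$ in $\beta$.

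The main work is monotonicity and unboundedness, and I would prove both by induction on $j=r-s$. By the alternating even/odd symmetries about $L=a+b+c$ and $2L$, each $\psi_j$ behaves like a sine. On a quarter period it runs monotonically from a zero, located at one of the symmetry points, to an extremum at the other, and consecutive $\psi_j$ swap the roles of these points. Let $g_j(x)$, $x\in[0,L]$, be the profile of $|\psi_j|$ measured from its zero. Then $g_j$ is nondecreasing, $\|\psi_j\|=g_j(L)$, and
$$g_{j+1}(x)=\int_{L-x}^{L}g_j(y)\,dy.$$
The induction hypothesis is that for $\beta<\beta'$ (so $L<L'$) one has $g_j'(x)\ge g_j(x)$ for $x\le L$. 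The step uses
$$g_j'(L'-y)\ge g_j'(L-y)\ge g_j(L-y),$$
where the first inequality is monotonicity of $g_j'$ and the second is the hypothesis. Integrating gives $g_{j+1}'\ge g_{j+1}$ on $[0,L]$, and monotonicity of $g_{j+1}$ is clear. The base cases $j\le 2$ are checked directly from the explicit formula \eqref{psi2} and its neighbours. For example, when $a$ grows the flat part $-b$ of $\psi_1$ gets longer, and when $c$ grows the zero part of $\psi_1$ gets longer. The lowest free index $j=r-\min\kUpper+1$ must be checked directly; for instance, when $c$ varies, $\psi_3$ gains a linear stretch of slope $ab+b^2/2$.

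Unboundedness follows because the first free index already has
$$\|\psi_j\|\ge g_{j-1}(L/2)\cdot\frac L2\to\infty$$
once $g_{j-1}$ is bounded below by a fixed positive constant on a stretch of length comparable to $L$. Strictness of the increase comes from the extra length $L'-L$ contributing a positive amount. I expect the main obstacle to be justifying the sine-like quarter-period structure (a single zero and monotone profile) uniformly for all $j$, since that is what makes the reversal formula for $g_{j+1}$ valid. I would prove it in the same induction, using the zero-mean symmetry.
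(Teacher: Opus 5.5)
Your construction is the paper's: $\alpha=M_r(f)$, $b$ fixed by $M_{r-1}(f)$ or $M_{r-2}(f)$, and $\beta$ playing the role of $a$ in $S_{(0,k,r-1,r)}$ and of $c$ in the other two families. Your $b=M_{r-1}(f)/M_r(f)$ is the correct value; the paper's $M_{r-2}(f)/M_r(f)$ for $\kk=(0,k,r-1,r)$ and $\kk=(0,k,r-2,r-1,r)$ is a misprint. Your $a$ equals the paper's $\frac{M_{r-2}}{M_{r-1}}-\frac{M_{r-1}}{2M_r}$.

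The differences are in the justifications.
For $a\ge 0$, the paper applies Landau's inequality \eqref{LandauInequality} to $f^{(r-2)}$, which gives $M_{r-1}^2\le 2M_{r-2}M_r$ at once. Your route through the solution \eqref{alternativeSolution} of Kolmogorov's problem is valid, since the realizing spline satisfies $M_{r-2}=\alpha(ab+b^2/2)\ge\alpha b^2/2$. But it uses a much deeper result for an elementary fact.
For the behaviour of $M_s(\psi_\kk(\beta))$, the paper only says it ``follows from the definition of the splines''. Your profile recursion $g_{j+1}(x)=\int_{L-x}^{L}g_j(y)\,dy$, with the domination on $[0,L]$ propagated through monotonicity of the larger profile, is a correct proof of what the paper leaves implicit. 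Your unboundedness and continuity arguments are also fine.

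Three details need fixing.
\begin{itemize}
\item \textbf{Base case.} In the $c$-families the domination hypothesis is false at $j=1$: there $g_1(x)=\min\{(x-c)_+,b\}$ \emph{decreases} as $c$ grows. Start the induction at $j=2$ instead. There $g_2$ is literally unchanged on $[0,L]$ when $c$ increases; only the plateau of height $ab+b^2/2$ gets longer. In the $a$-family $g_1(x)=\min\{x,b\}$ does not depend on $a$, so $j=1$ is fine there.
\item \textbf{Quarter-period structure.} The structure you flag as the main obstacle is easy and is needed only for $j\ge 1$. Since $\psi_0\ge 0$ on $[0,2L]$, the alternating symmetries give, by induction, that $\psi_j$ has constant sign between consecutive zeros at symmetry points. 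Hence $|\psi_j|$ is monotone on quarter-periods. Replace ``a single zero'' by ``a zero at the symmetry point'': $\psi_1$ vanishes on the whole interval $[a+b,L+c]$, and $\psi_0$ itself is not monotone on a quarter-period when $c>0$.
\item \textbf{Non-constancy.} Positivity of the entries of $M_{\kUpper}(f)$ requires $f$ to be non-constant, not merely non-zero. A non-zero constant has $M_r(f)=0$. This defect is inherited from the statement itself, but your justification ``since $f\neq 0$'' should read ``since $f$ is non-constant''.
\end{itemize}
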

\begin{proof}
    Existence of the required families of splines follows from~\eqref{splineNorms}. For $\kk = (0,k,r-2,r)$ one can consider the family $\alpha \psi_r(0,b,\beta)$, $\beta\geq 0$, with $\alpha = M_r(f)$ and $b = \sqrt{\frac{2M_{r-2}(f)}{M_r(f)}}$.  Analogously in the case $\kk = (0,k,r-1,r)$ one can consider the family $\alpha \psi_r(\beta,b,0)$, $\beta\geq 0$ with $\alpha = M_r(f)$ and $b = \frac{M_{r-2}(f)}{M_r(f)}$. In the case $\kk = (0,k,r-2,r-1,r)$, the required family is $\alpha\psi_r(a,b,\beta)$, $\beta\geq 0$, where $\alpha = M_r(f)$, $b = \frac{M_{r-2}(f)}{M_r(f)}$ and $a = \frac{M_{r-2}}{M_{r-1}} - \frac{M_{r-1}}{2M_r}$. The fact that $a\geq 0$ follows from Landau's inequality~\eqref{LandauInequality}.

    The statement about the norm $M_s(\psi_{\kk}(\beta))$ follows from the definition of the splines.
\end{proof}

The following comparison theorem is a generalization of Theorem~\ref{th::kolmogorovComparisonTheorem}.
\begin{theorem}
    Let $f\in L^r_{\infty,\infty}$, $K$ be defined by~\eqref{kVectors}, ${\bf k}\in K$ and $\psi_{\bf k}\in S_{\bf k}$ be such that 
    $$
    M_s(f) \leq M_s(\psi_{\bf k}), s\in \kUpper\text{ or } s = 0.
    $$
    If $\xi,\eta\in\RR$ are such that $f(\xi) = \psi_{\bf k}(\eta)$, then 
    $$
   |f'(\xi)|\leq |\psi_{\bf k}'(\eta)|.
    $$
\end{theorem}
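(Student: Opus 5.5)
We will argue by contradiction, following Kolmogorov's proof of Theorem~\ref{th::kolmogorovComparisonTheorem}. The splines in $S_{\bf k}$ have exactly the structure that argument needs. Each $\psi=\psi_{\bf k}$ is $4(a+b+c)$-periodic. On a half-period between a minimum point and the adjacent maximum point it is strictly monotone. Its graph on such a half-period, translated horizontally, covers every value in $[-\|\psi\|,\|\psi\|]$. Moreover, $\psi^{(j)}$ has the same even/odd symmetries as $\psi_j$ for every $j$.

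Suppose $|f'(\xi)|>|\psi'(\eta)|$ with $f(\xi)=\psi(\eta)$. Replacing $f$ by $\pm f(\pm\,\cdot)$, we may assume $f'(\xi)>0$ and that $\eta$ lies on an increasing half-period $[t_-,t_+]$ of $\psi$. First shrink $f$ slightly: set $f_\varepsilon=(1-\varepsilon)f$. Then $M_s(f_\varepsilon)<M_s(\psi)$ for $s=0$ and all $s\in\kUpper$, while $f_\varepsilon'(\xi)>\psi'(\eta_\varepsilon)$ still holds at the matching point $\eta_\varepsilon$. Next translate $\psi$ so that its graph on the monotone piece crosses the graph of $f_\varepsilon$ at $\xi$ transversally. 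Because $\|f_\varepsilon\|<\|\psi\|$, the difference $\Delta=\psi(\cdot+\eta-\xi)-f_\varepsilon$ has sign $+$ at the translated maximum point and $-$ at the translated minimum point, and it has a sign change at $\xi$ of the wrong orientation. Counting sign changes on one period, $\Delta$ therefore has strictly more than the two sign changes per half-period that the comparison allows. Repeated application of Rolle's theorem then transfers this excess of sign changes (counted on a period, using periodicity of $\psi$ and a standard approximation of $f$ by a periodic function on a long interval) from $\Delta$ to $\Delta^{(j)}$.

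The heart of the proof is showing that this is impossible at the top levels. Here the three norms in $\kUpper$ are used together with the explicit shape of $\psi_{r-2},\psi_{r-1},\psi_r$ given by~\eqref{psi2}. At level $r$, $\psi^{(r)}=\alpha\psi_0$ equals $\pm\alpha$ on the intervals of length $b$ and $0$ on the flat intervals. Since $|f^{(r)}|\le\alpha$, on each $\pm\alpha$ piece $\Delta^{(r)}$ has constant sign, so sign changes of $\Delta^{(r)}$ can occur only on the flat zones of lengths $a$ and $c$. The constraints $\|f^{(r-1)}\|\le\alpha b$ and $\|f^{(r-2)}\|\le\alpha(ab+b^2/2)$ are exactly what rule out sign changes there.

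On a zone where $\psi^{(r-1)}=\pm\alpha b$ is constant and extremal (the zone of length $a$), $\Delta^{(r-1)}$ has a fixed sign. On a zone where $\psi^{(r-2)}$ is constant and extremal (the zone of length $c$), $\Delta^{(r-2)}$ has a fixed sign. Combining these, I would show that $\Delta^{(r-2)}$ has at most two sign changes per period. This contradicts the count inherited from $\Delta$.

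For ${\bf k}=(0,k,r-1,r)$ one has $c=0$, and for ${\bf k}=(0,k,r-2,r)$ one has $a=0$, so only the corresponding subset of the constraints is needed. These two cases are special cases of the same argument.

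The main obstacle is this sign-change bookkeeping on the flat zones, where only non-strict inequalities hold. Equalities such as $\Delta^{(r-1)}=0$ on a whole interval must be handled by the $\varepsilon$-shrinking and by counting sign changes rather than zeros. A further difficulty is that $f$ is not periodic. I would handle this as Kolmogorov does: work on a long interval, compare with several periods of $\psi$, and use the strict inequality $\|f_\varepsilon\|<\|\psi\|$ to produce sign changes at every translated extremum of $\psi$, so that the counts on $[\xi-NT,\xi+NT]$ exceed the admissible number by a fixed amount independent of $N$.
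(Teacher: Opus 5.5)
Your overall strategy (contradiction, two extra sign changes of $\Delta$, transfer by Rolle's theorem, and a bound on sign changes at a high derivative using the flat zones) can be made to work, but as written it contains a step that fails. The paper gives no argument of its own here, only a reference to the literature, so your sketch has to stand on its own.

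The failing step is the treatment of non-periodicity in your last paragraph. There you count on the finite interval $[\xi-NT,\xi+NT]$ and rely on an excess that is ``independent of $N$''. On a finite interval each application of Rolle's theorem can lose one sign change. Counting at level $r-2$ (or $r-1$ for $(0,k,r-1,r)$) therefore loses $r-2$ (resp.\ $r-1$) sign changes, which is at least $2$ for every $\kk\in K$. This cancels the excess of $2$, so no contradiction results. The loss is also independent of $N$, so letting $N$ grow does not help. Knowing the sign of $\Delta$ at the extrema of $\psi$ does not help either, because you have no sign information on $\Delta^{(j)}$ at the endpoints. This is also not how the classical proof of Theorem~1 proceeds: that proof is local, on one monotonicity interval, and inductive in $r$.

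The periodization you mention only in passing is what makes your counting legitimate, and it must be carried out explicitly. Take a nonnegative partition of unity $\sum_{j\in\mathbb{Z}}\chi(\cdot-jNT)\equiv1$ whose transition zones have length $\ell$, and set $g=\sum_j\chi(\cdot-jNT)f_\varepsilon(\cdot-jNT)$. Then:
\begin{itemize}
\item $g$ is $NT$-periodic and equals $f_\varepsilon$ near $\xi$;
\item $\|g\|\le\|f_\varepsilon\|$;
\item $\|g^{(s)}\|\le\|f_\varepsilon^{(s)}\|+O(1/\ell)$, since the intermediate derivatives of $f$ are bounded by Kolmogorov's inequality~\eqref{kolmogorovInequality}.
\end{itemize}
The $\varepsilon$-room absorbs the $O(1/\ell)$ error. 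On the circle Rolle's theorem loses nothing, so with $\nu$ the number of sign changes per period you get $\nu(\Delta^{(j)})\ge\nu(\Delta)\ge 2N+2$.

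Second, the decisive step is only announced (``combining these, I would show''). One of your statements about it is also off. The constraints do not rule out sign changes of $\Delta^{(r)}$ on the flat zones, where $\Delta^{(r)}=-f^{(r)}$ can oscillate freely. What they do is fix the sign of $\Delta^{(r-1)}$, resp.\ $\Delta^{(r-2)}$, there.

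The missing argument runs as follows. On the rising stretch between the minimum and maximum plateaus of $\psi^{(r-2)}=\alpha\psi_2(a,b,c)$, there are three consecutive pieces:
\begin{itemize}
\item $\psi^{(r)}=\alpha$ on a piece of length $b$;
\item $\psi^{(r)}=0$ and $\psi^{(r-1)}=\alpha b$ on a piece of length $2a$;
\item $\psi^{(r)}=-\alpha$ on a piece of length $b$.
\end{itemize}
Hence $\Delta^{(r-2)}$ is convex, then non-decreasing, then concave. The middle piece is where $\|f^{(r-1)}\|\le\alpha b$ is used; it is absent if $a=0$. Moreover $\Delta^{(r-2)}$ is negative at the start and positive at the end, because $\|g^{(r-2)}\|<\|\psi^{(r-2)}\|$. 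A function with this structure stays positive once it becomes positive, so it changes sign exactly once on the stretch. Together with the fixed signs on the plateaus, this gives $\nu(\Delta^{(r-2)})\le 2N$, which is the contradiction.

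Finally, $(0,k,r-1,r)$ is not a special case of this count. There $\|f^{(r-2)}\|$ is not controlled, so the sign of $\Delta^{(r-2)}$ at the extrema of $\psi^{(r-2)}$ is unknown. In that case you must count at level $r-1$ instead. $\Delta^{(r-1)}$ has fixed signs on the plateaus $\psi^{(r-1)}=\pm\alpha b$ and is monotone where $\psi^{(r)}=\pm\alpha$, so $\nu(\Delta^{(r-1)})\le 2N$.
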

The proof of this theorem is contained in~\cite[Theorem~2]{Babenko12} for the case ${\bf k} = (0,k,r-2,r)$, and it can be proved using similar arguments for the other two cases. 

Again, this comparison theorem is arguable more important than the solutions of the Kolmogorov problem for the cases $\kk\in K$. A comparison theorem is not only a key result for the solutions of the Kolmogorov problem, but also allows to obtain solutions to various other extremal problems using standard arguments.  We refer to~\cite[Chapter~3]{ExactConstants} and~\cite[Chapter~2]{Babenko25OstrowskiBook}, where solutions to various extremal problems  on classes with a given comparison function are given.

\subsection{Existence of Extremal Splines}
The following result implies that for classes of functions that are defined in terms of some restrictions on the uniform norms of their higher derivatives, the modulus of continuity~\eqref{modulusOfContinuity} of the differential operator $D^k$ attains its values on the splines. It is essentially a direct consequence of solution~\eqref{alternativeSolution} of the Kolmogorov problem.
\begin{theorem}\label{th::main}
    Let $K$ be defined by~\eqref{kVectors}, $\kk\in K$ and 
    $$
    X = M_{\kUpper}^{-1}(A) = \{f\in L^r_{\infty,\infty}\colon M_{\kUpper}(f)\in A\},
    $$
    where $A\subset (0,\infty)^{|\kUpper|}$ is a non-empty set that satisfies the following property: if a sequence $\left\{a^n = \left(a^n_1,\ldots, a^n_{|\kUpper|}\right)\right\}\subset A$ is such that $a^n_{|\kUpper|}\to\infty$, then for some $1\leq i < |\kUpper|$ one has $a^n_i\to 0$, $n\to\infty$. If $\delta >0$ is such that $\{f\in X\colon M_0(f)\leq \delta\}\neq \emptyset$, then the supremum in the definition~\eqref{modulusOfContinuity} of the modulus of continuity is attained on some spline $\psi_{\kk}\in S_{\kk}\cap X$ such that $M_0(\psi_{\kk}) = \delta$.
\end{theorem}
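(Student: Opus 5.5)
The plan is to reduce the supremum over $X$ to a supremum over the splines $S_\kk$, using the solution~\eqref{alternativeSolution} of the Kolmogorov problem together with Lemma~\ref{l::splinesFamily}. Then we show the supremum is attained, using a compactness argument that rests on the growth hypothesis on $A$.

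\emph{Step 1 (reduction to splines with $M_0=\delta$).} Take any $f\in X$ with $M_0(f)\le\delta$ and $f\neq 0$. By~\eqref{alternativeSolution} there are $\psi\in S_\kk$ and $d\ge 0$ with $M_\kk(f)=M_\kk(\psi+d)$. Since $\psi$ is periodic with zero mean, $\|\psi+d\|=\|\psi\|+d$ while the derivative norms do not change. Hence $M_0(\psi)\le M_0(f)\le\delta$, $M_k(\psi)=M_k(f)$, and $M_\kUpper(\psi)=M_\kUpper(f)\in A$, so $\psi\in X$. Next, I apply Lemma~\ref{l::splinesFamily} to $\psi$ to get a family $\psi_\kk(\beta)$ with the same upper norms. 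From the lemma's construction, $\beta$ plays the role of the free parameter ($a$ or $c$), and $\psi$ itself is a member for some $\beta_0$. The norms $M_0(\psi_\kk(\beta))$ and $M_k(\psi_\kk(\beta))$ increase continuously in $\beta$, and $M_0\to\infty$, so there is $\beta_1\ge\beta_0$ with $M_0(\psi_\kk(\beta_1))=\delta$ and $M_k(\psi_\kk(\beta_1))\ge M_k(f)$. Consequently
\[
\omega(X;\delta)=\sup\{M_k(\psi)\colon \psi\in S_\kk\cap X,\ M_0(\psi)=\delta\}.
\]
Here I must check that monotonicity in $\beta$ indeed holds for both $s=0$ and $s=k$. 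This follows from the explicit form of $\psi_r(a,b,c)$: enlarging the flat pieces enlarges every lower primitive's norm. I would verify this directly, or via Theorem~\ref{th::kolmogorovComparisonTheorem}-type comparison.

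\emph{Step 2 (attainment).} Take a maximizing sequence $\psi^n=\alpha_n\psi_r(a_n,b_n,c_n)$ in $S_\kk\cap X$ with $M_0(\psi^n)=\delta$. The parameters are determined by $M_\kUpper(\psi^n)\in A$ through the formulas in Lemma~\ref{l::splinesFamily}. Specifically, $\alpha_n=M_r$, $b_n$ is given by $M_{r-1}/M_r$ or $\sqrt{2M_{r-2}/M_r}$, and $a_n$ is as in the lemma. The key point is to show that these parameters stay in a compact set of nondegenerate values, i.e. that $\alpha_n$ and $b_n$ stay bounded away from $0$ and $\infty$ and that $a_n,c_n$ stay bounded. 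The constraint $M_0=\delta$ bounds the flat-part lengths and forces the scale $\alpha_n b_n^r$ to be comparable to $\delta$. If $M_r(\psi^n)=\alpha_n\to\infty$, the hypothesis on $A$ gives some lower upper-norm tending to $0$. By Kolmogorov's inequality~\eqref{kolmogorovInequality} (or Landau's~\eqref{LandauInequality}) applied between $M_0\le\delta$ and that small norm, $M_k(\psi^n)\to 0$ in that case, so such sequences are not maximizing unless $\omega=0$, which is impossible. If $\alpha_n\to 0$, then~\eqref{kolmogorovInequality} with $M_0=\delta$ gives $M_k\to 0$ again. Thus the parameters lie in a compact set after passing to a subsequence, and continuity of $(\alpha,a,b,c)\mapsto M_s(\alpha\psi_r(a,b,c))$ yields a limit spline $\psi_\kk$ with $M_0=\delta$ and $M_k=\omega(X;\delta)$.

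\emph{Main obstacle.} The delicate part is showing that the limit spline still lies in $X$, i.e. that $M_\kUpper(\psi_\kk)\in A$, since $A$ is not assumed closed. I would first try to avoid taking limits in $A$: the problem reduces to a finite-dimensional optimization in which the sup may well be approached only at the boundary of $A$. So I expect either to need closedness of $A$, implicitly assumed, or to argue that $M_k$ on $\{M_0=\delta\}$ is monotone in the upper norms so that the optimum sits at an attained point. I would treat this point carefully, possibly by restricting to the closure and noting that the hypothesis prevents escape to infinity.
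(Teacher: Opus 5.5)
Your route is the paper's. You reduce a maximizing sequence to splines $\psi^*_n\in S_\kk\cap X$ with $M_0=\delta$ via \eqref{alternativeSolution} and the $\beta$-monotonicity of Lemma~\ref{l::splinesFamily}. You then trap $(\alpha_n,a_n,b_n,c_n)$ in a compact set using \eqref{kolmogorovInequality} and the hypothesis on $A$.

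\textbf{Closedness of $A$.} The obstacle you single out is real, and the paper's proof does not address it either: it ends after extracting convergent parameter subsequences and never checks that $M_\kUpper(\psi_\kk)\in A$. In fact the statement is false without closedness. Take $r=3$, $k=1$, $\kk=(0,1,2,3)$, $A=(0,1)\times\{1\}$ (the growth hypothesis holds vacuously) and $\delta\ge 3$.
\begin{itemize}
\item By \eqref{alternativeSolution}, each $f\in X$ with $M_0(f)\le\delta$ shares $M_1,M_2,M_3$ with some $\psi_3(a,x,0)$, where $x=M_2(f)\in(0,1)$ and $a^2x/2+ax^2+x^3/3\le\delta$.
\item Hence $M_1(f)=ax+x^2/2\le m(x):=\sqrt{2\delta x+x^4/3}-x^2/2$, with equality for the spline normalized by $M_0=\delta$, which lies in $X$.
\item Since $m'(x)\ge \delta/\sqrt{2\delta+1/3}-1>0$ on $(0,1]$, you get $\omega(X;\delta)=\sup_{0<x<1}m(x)=m(1)$, while $M_1(f)\le m(M_2(f))<m(1)$ for every $f\in X$. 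So the supremum is not attained.
\end{itemize}
Your monotonicity fallback therefore cannot work: monotonicity of the optimal $M_k$ in the upper norms is precisely what drives the optimizer onto the missing boundary. The correct repair is to assume $A$ is closed in $(0,\infty)^{|\kUpper|}$. The limit vector $M_\kUpper(\psi_\kk)$ has entries among $\alpha$, $\alpha b$, $\alpha(ab+b^2/2)$, all positive since $\alpha,b>0$, so it lies in $A$ and the proof closes. The set $\{x^{1-\eta}y^{\eta}\le 1\}$ used in the paper's application is closed in this sense.

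\textbf{Order of the compactness step.} Your compactness step is out of order. The constraint $M_0=\delta$ alone neither bounds $a_n,c_n$ nor keeps $\alpha_nb_n^r$ away from $0$. For example, $a^2b/2+ab^2+b^3/3=\delta$ is compatible with $b\to 0$, $a\to\infty$. So ``$\alpha_nb_n^r$ comparable to $\delta$'' is not available: \eqref{kolmogorovInequality} with $k=r-1$ only gives $\alpha_nb_n^r\lesssim\delta$. You must first rule out $b_n\to 0$, as the paper does. Since $M_{r-1}(\psi^*_n)=\alpha_nb_n$, if $b_n\to0$ then \eqref{kolmogorovInequality} between $M_0$ and $M_{r-1}$ (recall $k<r-1$) forces $M_k(\psi^*_n)\to 0$. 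Only once $\alpha_n$ and $b_n$ are bounded below does $M_0=\delta$ bound $a_n$ and $c_n$.

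A minor point: $\|\psi+d\|=\|\psi\|+d$ for $d\ge0$ follows from the antiperiodicity $\psi(t+2L)=-\psi(t)$, $L=a+b+c$, which gives $\max\psi=\|\psi\|$. Zero mean alone does not give it.
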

\begin{proof}
    Assume that $f_1,\ldots, f_n,\ldots$ is a sequence of functions from $X$ such that $M_0(f_n) \leq \delta$ for all $n\in\NN$ and $\lim_{n\to\infty} M_k(f_n)= \omega(X;\delta)$. According to~\eqref{alternativeSolution} for each $n\in\NN$ we can choose a spline $\psi_{n} \in S_{\kk}$ and a number $d_n\geq 0$ such that $M_{\kk}(f) = M_{\kk}(\psi_n + d_n)$. This, in particular, means that $M_0(f)\geq M_0(\psi_n)$, and hence the parameter $\beta$ (see Lemma~\ref{l::splinesFamily} and its proof) of the spline $\psi_n$ can be made larger (or kept the same) so that  for the obtained spline $\psi^*_n$ we have $M_{\kUpper}(f) = M_{\kUpper}(\psi^*_n)$ (hence $\psi^*_n\in X$), $M_0(\psi^*_n) = \delta$ and $M_k(\psi^*_n)\geq M_k(\psi_n) = M_k(\psi_n + d_n)$. Thus 
\begin{equation}\label{extremalSeqOfSplines}
       \lim_{n\to\infty} M_k(\psi^*_n)= \omega(X;\delta).
    \end{equation}

    Let $\psi^*_n = \alpha_n\psi_r(a_n,b_n,c_n) \in S_{\kk}$ for all $n\in\NN$. It is now sufficient to show that from each of the sequences $\{\alpha_n\}$, $\{a_n\}$, $\{b_n\}$ and $\{c_n\}$ one can extract a converging subsequence to numbers $\alpha, a,b,c$ respectively such that $\alpha, b > 0$.

    First we prove that the sequence $\alpha_n$ is separated from zero. Assume the contrary, switching to a subsequence if needed, we may suppose that $\lim_{n\to\infty} \alpha_n = 0$. However, from Kolmogorov's inequality~\eqref{kolmogorovInequality} we obtain that for some $C>0$
    $$
    M_k(\psi^*_n)\leq C\cdot \delta^{1-\frac kr}\cdot \alpha_n^{\frac kr}\to 0, n\to\infty,
    $$
    which contradicts to~\eqref{extremalSeqOfSplines}.

    Next we prove the sequence $\alpha_n$ is bounded from above. Again, assuming the contrary, we may suppose that $\alpha_n\to\infty$, $n\to\infty$. Then from the assumption on the the set $A$ we obtain that  one of the sequences $M_{r-1}(\psi^*_n)$ or $M_{r-2}(\psi^*_n)$ tends to zero as $n\to\infty$, and using the Kolmogorov inequality once again, we obtain that $M_k(\psi^*_n)\to 0$, $n\to\infty$, which is impossible.

    Thus we may assume that $\lim_{n\to\infty}\alpha_n = \alpha > 0$.

    Using similar arguments as above we obtain that $M_{r-1}(\psi^*_n)$ can not tend to $0$ as $n\to\infty$ (otherwise $M_{k}(\psi^*_n)\to 0$, $n\to\infty$, which is impossible); moreover, by Kolmogorov's inequality boundedness of $M_0(\psi^*_n)$ and $M_r(\psi^*_n)$ implies boundedness of $M_{r-1}(\psi^*_n) = \alpha_n\cdot b_n$. Hence switching to a subsequence if needed, we may assume that $b_n\to b> 0$, $n\to\infty$.

    Finally, the sequences $\{a_n\}$ and $\{c_n\}$  must also be bounded, since otherwise the norm $M_0(\psi^*_n)$ would be unbounded. Hence both these sequences have converging (to non-negative numbers) subsequences.
\end{proof}
\subsection{Applications of Theorem~\ref{th::main}}
Next we show that using the observation from Theorem~\ref{th::main}, we can obtain inequality~\eqref{dragomirInequality}. Taking into account inequality~\eqref{inequalityInTermsOfModulusOfContinuity}, it is sufficient to be able to compute the values $\omega(X, D^1;\delta)$, $\delta > 0$ on the class $X = \{f\in L^3_{\infty,\infty}\colon p_\eta(f)\leq 1\}$, where $p_\eta(f) = \|f''\|^{1-\eta}\cdot \|f'''\|^{\eta}$, $\eta\in (0,1)$. In the notations of Theorem~\ref{th::main}, this class is defined by the set $A = \{(x,y)\colon x,y>0, x^{1-\eta} y^\eta \leq 1\}$, which satisfies the assumptions of Theorem~\ref{th::main}.

Since for $\lambda >1$ and  $f\in L^3_{\infty,\infty}$, one has  $g(t) :=  f(\lambda t)\in L^3_{\infty,\infty}$, $M_0(f) = M_0(g)$, $M_k(g) > M_k(f)$ and $p_\eta(g) = \lambda^{2+3\eta} p_\eta(f)$, the supremum in definition~\eqref{modulusOfContinuity} can be taken over functions $f$ with $p_{\eta}(f) = 1$. 

Taking into account~\eqref{psi2}, 
$$
\|\psi_3(a,b,0)\| = \left|\int_0^{a+b}\psi_2(a,b,0;t)dt\right| = \frac{a^2b}{2} + ab^2 + \frac{b^3}{3}. 
$$
Thus in the case $\kk = (0,1,2,3)$, using~\eqref{splineNorms}, in order to compute the value $\omega(D^1;X;\delta)$, $\delta >0$, we arrive at the following extremal problem 
$$
\begin{cases}
    \alpha \left(ab + \frac{b^2}{2}\right)\to \sup,\\
    \alpha\left(\frac{a^2b}{2} + ab^2 + \frac{b^3}{3}\right) = \delta,\\
    \alpha b^{1-\eta} = 1,
\end{cases}
$$
for parameters $\alpha,a,b$. Thus the sharp constant in~\eqref{dragomirInequality} can be found by solving this extremal problem, which is a standard computational task.

In the general case of $\kk\in K$ (with  arbitrary $0<k<r$), the modulus of continuity $\omega$ can be computed from a similar extremal problem. Expressions for the norms  of special classes of splines (which include the spaces $S_{\kk}$, $\kk\in K$) were obtained in~\cite[Lemma~2]{Dzyadyk74}.

\bibliographystyle{elsarticle-num}
\bibliography{bibliography}
\end{document}